\newtheorem{theorem}{Theorem}[section]
\newtheorem{lemma}[theorem]{Lemma}
\theoremstyle{remark}
\newtheorem{remark}{Remark}[section]
\theoremstyle{definition}
\newtheorem{definition}{Definition}[section]
\theoremstyle{definition}
\begin{document}

\title{Existence and uniqueness of a positive solution\\
to generalized nonlocal thermistor problems\\
with fractional-order derivatives\thanks{Submitted 17-Jul-2011;
revised 09-Oct-2011; accepted 21-Oct-2011; for publication in the journal
\emph{Differential Equations \& Applications} ({\tt http://dea.ele-math.com}).}}

\author{Moulay Rchid Sidi Ammi\\
Group of Mathematical and Numerical Analysis of PDEs and Applications (AMNEA)\\
Department of Mathematics, Moulay Ismail University\\
Faculty of Science and Technology\\
B.P. 509, Errachidia, Morocco\\
\texttt{sidiammi@ua.pt} \\[0.3cm]
\and
Delfim F. M. Torres\\
Center for Research and Development in Mathematics and Applications (CIDMA)\\
Department of Mathematics, University of Aveiro\\
Campus Universit\'{a}rio de Santiago\\
3810-193 Aveiro, Portugal\\
\texttt{delfim@ua.pt}}

\date{}

\maketitle


\begin{abstract}
In this work we study a generalized nonlocal thermistor problem
with fractional-order Riemann--Liouville derivative.
Making use of fixed-point theory, we obtain existence
and uniqueness of a positive solution.
\end{abstract}


\smallskip

\textbf{Mathematics Subject Classification 2010:} 26A33, 35B09, 45M20.

\smallskip


\smallskip

\textbf{Keywords:} Riemann--Liouville derivatives;
nonlocal thermistor problem; fixed point theorem;
positive solution.

\medskip


\section{Introduction}

Joule heating is generated by the resistance of materials to electrical current
and is present in any electrical conductor operating at normal temperatures.
The heating of such conductors has undesirable side effects.
Problems dealing with the combined heat and current flows were considered
in \cite{cim,MR1215637,kw,mac}, where various aspects of the so-called thermistor
problem were analyzed. The mathematical model of the nonlocal
steady thermistor problem has the form
\begin{equation}
\label{eq1}
\triangle u = \frac{\lambda f(u)}{\left(
\int_{\Omega} f(u)\, dx\right)^{2}},
\end{equation}
where $\triangle$ is the Laplacian with respect to the spacial
variables. Such problems arise in many applications, for instance,
in studying the heat transfer in a resistor device
whose electrical conductivity $f$ is strongly
dependent on the temperature $u$. The equation \eqref{eq1}
describes the diffusion of the temperature with the presence of a
nonlocal term as a result of Joule effect. Constant $\lambda$
is a dimensionless parameter, which can
be identified with the square of the applied potential difference
at the ends of the conductor. Function $u$ represents the temperature generated
by the electric current flowing through a conductor. For more description, we refer to
\cite{lac2,tza}. A deep discussion about the history of thermistors,
and more detailed accounts of their advantages and applications to
industry, can be found in \cite{kw,mac}. In \cite{ac} Antontsev and Chipot
studied existence and regularity of weak solutions to the
thermistor problem under the condition that the electrical
conductivity $f(u)$ is bounded.

Fractional differential equations are a generalization of ordinary
differential equations and integration to arbitrary noninteger orders.
The origin of fractional calculus goes back to Newton and Leibniz
in the seventieth century. In recent years there has been a great deal
of interest in fractional differential equations. They provide a powerful
tool for modeling and solving various problems in various fields:
physics, mechanics, engineering, electrochemistry, economics, visco-elasticity,
feedback amplifiers, electrical circuits and fractional multipoles
--- see, for example, \cite{72,73,saba,samko,75,7} and references therein.
Using fixed point theorems, like Shauder's fixed point theorem,
and Banach's contraction mapping principle,
many results of existence have been obtained
to linear and nonlinear equations and, more recently,
to fractional derivative equations. The interested reader
can see \cite{agarwal1,agarwal}. For a physical meaning
to the initial conditions of fractional differential
equations with Riemann--Liouville derivatives
we refer to \cite{gian,MyID:181,pod1}.

Our main concern in this paper is to prove existence and uniqueness
of solution to a general fractional order nonlocal thermistor problem of the form
\begin{equation}
\label{eq2}
\begin{gathered}
 D^{2 \alpha} u = \frac{\lambda f(u)}{(
\int_{0}^{T} f(u)\, dx)^{2}}+ h(t) \, , \quad  t \in  (0, T)  \, , \\
I^{\beta} u(t)|_{t=0} = 0 ,  \quad    \forall \beta \in  (0,1],
\end{gathered}
\end{equation}
under suitable conditions on $f$ and $h$ (see Theorem~\ref{thm1}).
We assume that $T$ is a fixed positive real
and $ \alpha > 0$ a parameter describing the order of the fractional derivative.
In the literature we may find a great number of definitions of fractional derivatives.
In this paper, the fractional derivative is considered in the Riemann--Liouville sense.
In the case $\alpha =1$ and $h=0$, the fractional equation \eqref{eq2}
becomes the one-dimensional nonlocal steady state thermistor problem.
The values of $0< \alpha <\frac{1}{2}$ correspond to intermediate processes.
We further prove the boundedness of $u$ (see Theorem~\ref{thm:bnd}),
which is of considerable importance from a practical and physical point of view:
it is interesting to keep the temperature from exceeding some extremal values
that may damage the conductor.


\section{Preliminaries}

In this section, we give some basic definitions and preliminary
facts that are used further in the paper.
Let $0< \alpha <\frac{1}{2}$ and
$X = \left( C([0,T]) , \| \cdot \| \right)$,
where $C([0,T])$ is the space  of all continuous functions on $[0,T]$.
For $ x \in C([0,T])$, define the norm
$$
\| x \| =  \sup_{t\in [0,T]} \{ e^{-Nt} |x(t)| \},
$$
which is equivalent to the standard supremum norm for $f \in
C([0,T])$. It is used in literature in many papers, see for
example \cite{sayed}. The use of this norm is technical and
allow us to simplify the integral calculus. By $L^{1}([0,T],
\mathbb{R})$ we denote the set of Lebesgue integrable functions on
$[0,T]$. Throughout the text $c$ denote constants which may change
at each occurrence. As in \cite{ac}, we consider that the
electrical conductivity is bounded. We now assume the following
assumptions:
\begin{itemize}

\item[(H1)] $f: \mathbb{R}^{+}\rightarrow \mathbb{R}^{+}$ is a Lipschitz continuous
function with Lipschitz constant $L_{f}$ such that $c_{1} \leq f(u) \leq c_{2}$,
with $c_{1}, c_{2}$ two positive constants;

\item[(H2)] $h$ is continuous on $(0, T)$ with $h \in L^{\infty}(0, T)$.

\end{itemize}

\begin{definition}
\label{def1} \rm
The fractional (arbitrary) integral
of order $ \alpha \in \mathbb{R}^+$
of a function $f \in L_1[a,b] $ is defined by
\[
I^{\alpha}_a
 f(t) = \int_a^t \frac{(t - s)^{ \alpha-1}}{\Gamma( \alpha)} f(s)\, ds,
\]
where $\Gamma$ is the gamma function
(see, \textrm{e.g.}, \cite{4,pod1,5,samko}).
For $a = 0$ we put $I^{\alpha} := I^{\alpha}_0$.
\end{definition}

\begin{remark}
For $f, g \in L_1[a,b]$ one has
\[
I^{ \alpha}_a (f(t)+g(t))
= I^{ \alpha}_a f(t) + I^{ \alpha}_a g(t).
 \]
Note also that $I^{ \alpha}f(t) \in C(\mathbb{R}^{+})$
for $f \in C(\mathbb{R}^{+})$ and,
moreover, $I^{ \alpha}f(0)=0$.
\end{remark}

\begin{definition}[see, \textrm{e.g.}, \cite{4,pod1,5,samko}]
\label{def2}
\rm
The Riemann--Liouville fractional
(arbitrary) derivative of order $ \alpha \in (n-1,n)$, $n \in \mathbb{N}$,
of function $f$ is defined by
\[
D^{ \alpha}_a f(t) = \frac{d^n}{dt^n} I^{n- \alpha}_a f(t)
= \frac{1}{\Gamma(n- \alpha)}  \left(\frac{d}{dt}\right)^n
\int_a^t (t - s)^{n- \alpha-1} f(s) ds ,
\quad   t \in [a,b].
\]
\end{definition}


\section{Main Results}

Our main result asserts existence of a unique solution
to \eqref{eq2} on $C(\mathbb{R}^{+})$ of the form
\begin{equation}
\label{eq4}
\begin{split}
u(t) &= I^{2\alpha} \left\{\frac{\lambda f(u)}{( \int_{0}^{T} f(u)\,
dx)^{2}}+ h(t) \right\}\\
&= \int_0^t\frac{(t-s)^{2 \alpha-1}}{\Gamma(2
\alpha)}  \left\{\frac{\lambda f(u)}{( \int_{0}^{T} f(u)\, dx)^{2}}
+ h(s) \right\}  ds \, .
\end{split}
\end{equation}


\subsection{Existence and Uniqueness}

We begin by proving the equivalence between \eqref{eq2} and \eqref{eq4}
on the space $C(\mathbb{R}^{+})$. This restriction of the space of functions
allows to exclude from the proof a stationary function
with Riemann-Liouville derivative of order $2\alpha$ equal
to $d \cdot t^{2\alpha-1}$, $d\in \mathbb{R}$, which
belongs to the space $C_{1-2\alpha}[0, T]$
of continuous weighted functions.

\begin{lemma}
\label{m:lema} Suppose that $\alpha \in (0, \frac{1}{2})$. Then
the nonlocal problem \eqref{eq2} is equivalent to the integral
equation \eqref{eq4} on the space $C(\mathbb{R}^{+})$.
\end{lemma}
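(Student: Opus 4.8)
The plan is to prove the equivalence in two directions, exploiting the two defining properties of the Riemann--Liouville integral recorded in the Remark, namely that $I^{\alpha}$ maps continuous functions to continuous functions vanishing at the origin, and that $D^{2\alpha}$ and $I^{2\alpha}$ interact as (one-sided) inverses. First I would show that any solution $u \in C(\mathbb{R}^{+})$ of the integral equation \eqref{eq4} solves the differential problem \eqref{eq2}. Writing $g(t) := \frac{\lambda f(u)}{(\int_0^T f(u)\,dx)^2} + h(t)$, I note that under (H1)--(H2) the map $g$ lies in $L^1[0,T]$, so $u = I^{2\alpha} g$ is well defined. Applying $D^{2\alpha}$ to both sides and using the composition rule $D^{2\alpha} I^{2\alpha} g = g$ (valid for $g \in L^1$) recovers the fractional differential equation. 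For the initial condition I would apply $I^{\beta}$ to $u = I^{2\alpha} g$, use the semigroup property $I^{\beta} I^{2\alpha} = I^{\beta + 2\alpha}$, and invoke the Remark's observation that $I^{\gamma} g(0) = 0$ for $g$ continuous and $\gamma > 0$, which yields $I^{\beta} u(t)|_{t=0} = 0$ as required.

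For the converse, I would start from a solution $u \in C(\mathbb{R}^{+})$ of \eqref{eq2} and apply the integral operator $I^{2\alpha}$ to the equation $D^{2\alpha} u = g$. The key identity here is the fundamental theorem for Riemann--Liouville operators, $I^{2\alpha} D^{2\alpha} u = u - c\, t^{2\alpha - 1}$ for some constant $c \in \mathbb{R}$ (since $2\alpha \in (0,1)$, only a single term from the kernel of $D^{2\alpha}$ appears). Thus $u(t) = I^{2\alpha} g(t) + c\, t^{2\alpha - 1}$. It remains to eliminate the spurious term $c\, t^{2\alpha-1}$, and this is exactly where the restriction to the space $C(\mathbb{R}^{+})$, emphasized in the paragraph preceding the lemma, does its work: the function $t \mapsto t^{2\alpha - 1}$ blows up as $t \to 0^{+}$ (because $2\alpha - 1 < 0$) and hence is \emph{not} continuous on $[0,T]$ unless $c = 0$. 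Since $I^{2\alpha} g$ is continuous and vanishes at $0$ while $u$ is assumed continuous, the only way for the identity to hold in $C(\mathbb{R}^{+})$ is $c = 0$, which delivers precisely \eqref{eq4}.

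I expect the main obstacle to be the careful justification of the two composition rules, $D^{2\alpha} I^{2\alpha} = \mathrm{id}$ and $I^{2\alpha} D^{2\alpha} = \mathrm{id} - (\text{kernel term})$, at the correct level of regularity. These identities are standard for sufficiently smooth functions, but here $u$ is only assumed continuous and $g$ only $L^1$, so I would need to verify that $I^{2\alpha} g$ has the regularity needed for $D^{2\alpha}$ to act (equivalently, that $I^{1-2\alpha}(I^{2\alpha} g) = I^{1}g$ is absolutely continuous, which it is since $g \in L^1$), and that differentiating under the integral is legitimate. The positivity and boundedness furnished by (H1) guarantee that the nonlocal denominator $\int_0^T f(u)\,dx$ is bounded away from zero, so $g$ inherits the integrability and continuity properties needed throughout; this is a minor but necessary check. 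Once these regularity points are settled, the elimination of the $t^{2\alpha-1}$ term via the continuity constraint is the conceptual heart of the argument and is precisely what the choice of ambient space $C(\mathbb{R}^{+})$ is designed to make transparent.
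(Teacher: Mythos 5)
Your argument is correct and follows the same overall strategy as the paper: both directions are handled by composing $D^{2\alpha}$ and $I^{2\alpha}$ and controlling the resulting kernel term. The one substantive difference is in the direction \eqref{eq2} $\Rightarrow$ \eqref{eq4}. The paper rewrites the equation as $\frac{d}{dt}I^{1-2\alpha}u = g$, integrates from $0$ to $t$, and kills the constant of integration by invoking the initial condition $I^{1-2\alpha}u(t)|_{t=0}=0$ (the case $\beta = 1-2\alpha$ of the data in \eqref{eq2}), before applying $I^{2\alpha}$ and differentiating. You instead quote the composition formula $I^{2\alpha}D^{2\alpha}u = u - c\,t^{2\alpha-1}$ and eliminate $c$ by observing that $t^{2\alpha-1}$ is unbounded at the origin while $u - I^{2\alpha}g$ is continuous there. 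Both mechanisms are sound; yours makes explicit that continuity of $u$ at $0$ already forces $c=0$ (indeed $c$ is proportional to $(I^{1-2\alpha}u)(0)$, which vanishes for any $u\in C(\mathbb{R}^{+})$ by the Remark), so the initial condition is not strictly needed in that direction, whereas the paper uses it; this is exactly the role of the ambient space that the paragraph preceding the lemma alludes to. In the converse direction the two arguments coincide (applying $I^{1-2\alpha}$ and then one ordinary derivative is precisely your identity $D^{2\alpha}I^{2\alpha}=\mathrm{id}$), and you additionally verify the initial condition $I^{\beta}u|_{t=0}=0$ via the semigroup property, a step the paper's proof leaves implicit. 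Your closing regularity checks (absolute continuity of $I^{1}g$ for $g\in L^{1}$, the denominator bounded below by $(c_{1}T)^{2}$ under (H1)) are the right points to settle and present no difficulty.
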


\begin{proof}
First we prove that \eqref{eq2} implies \eqref{eq4}.
For $t>0$ equation \eqref{eq2} can be written as
\[
\frac{d}{dt} I^{1-2 \alpha} u(t) =\frac{\lambda f(u)}{\left(
\int_{0}^{T} f(u)\, dx\right)^{2}} + h(t).
\]
Integrating both sides of the above equation, we obtain
\[
I^{1-2 \alpha} u(t)
- I^{1-2 \alpha} u(t)|_{t=0} = \int_0^t
\left\{\frac{\lambda f(u)}{\left(
\int_{0}^{T} f(u)\, dx\right)^{2}}+ h(s)\right\} \,ds.
\]
Since $0< 1-2 \alpha < 1$,
\[
I^{1-2 \alpha} u(t) = \int_0^t
\left\{\frac{\lambda f(u)}{\left(
\int_{0}^{T} f(u)\, dx\right)^{2}}+ h(s)\right\}\, ds\,.
\]
Applying the operator $I^{2 \alpha}$ to both sides, we get
\[
I u(t) = I^{2 \alpha+1} \left\{\frac{\lambda f(u)}{\left(
\int_{0}^{T} f(u)\, dx\right)^{2}}+ h(t) \right\}.
\]
Differentiating both sides,
\[
u(t) = I^{2 \alpha} \left\{\frac{\lambda f(u)}{\left(
\int_{0}^{T} f(u)\, dx\right)^{2}}+ h(t) \right\}.
\]
Let us now prove that \eqref{eq4} implies \eqref{eq2}.
Since $u \in C(I)$ and $I^{1-2 \alpha}u(t) \in C(I)$,
applying the operator $I^{1-2 \alpha}$ to both sides
of \eqref{eq4} one obtains
\begin{align*}
I^{1-2 \alpha} u(t)
&= I^{1-2 \alpha} I^{2 \alpha} \left\{\frac{\lambda f(u)}{\left(
\int_{0}^{T} f(u)\, dx\right)^{2}}+ h(t)\right\}\\
&= I\left\{\frac{\lambda f(u)}{\left(
\int_{0}^{T} f(u)\, dx\right)^{2}}+ h(t) \right\}\,.
\end{align*}
Differentiating both sides of the above equality,
\[
D I^{1-2 \alpha} u(t) = D I \left\{\frac{\lambda f(u)}{\left(
\int_{0}^{T} f(u)\, dx\right)^{2}}+ h(t)\right\}.
\]
Then,
\[
D^{2 \alpha} u(t) = \frac{\lambda f(u)}{\left(
\int_{0}^{T} f(u)\, dx\right)^{2}}+ h(t),  \quad t > 0.
\]
\end{proof}

\begin{theorem}
\label{thm1}
Let $f$ and $h$ satisfy hypotheses $(H1)$ and $(H2)$.
Then there exists a unique solution $u \in X $ of \eqref{eq2} for
all $0 < \lambda < \frac{N^{2 \alpha}}{L_{f}
\left(\frac{1}{\left(c_{1}T\right)^{2}}
+ \frac{2c_{2}^{2}T}{\left(c_{1}T\right)^{4}} e^{NT}\right)}$.
\end{theorem}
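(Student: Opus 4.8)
The plan is to recast the problem as a fixed-point equation and apply the Banach contraction mapping principle on the Banach space $X = (C([0,T]), \|\cdot\|)$ with the weighted norm $\|x\| = \sup_{t\in[0,T]} e^{-Nt}|x(t)|$. By Lemma~\ref{m:lema}, solving \eqref{eq2} is equivalent to finding a fixed point of the operator $\Phi$ defined by
\[
\Phi(u)(t) = \int_0^t \frac{(t-s)^{2\alpha-1}}{\Gamma(2\alpha)}
\left\{\frac{\lambda f(u)}{\left(\int_0^T f(u)\,dx\right)^2} + h(s)\right\} ds.
\]
First I would verify that $\Phi$ maps $X$ into $X$: using (H1) the integrand is controlled by $\lambda c_2/(c_1 T)^2 + \|h\|_{L^\infty}$, and the remark guarantees $I^{2\alpha}$ preserves continuity, so $\Phi(u)\in C([0,T])$. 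The substantive work is showing $\Phi$ is a contraction, and the threshold on $\lambda$ in the statement is precisely the condition that the contraction constant is strictly less than $1$.

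To estimate $\|\Phi(u) - \Phi(v)\|$, I would bound the difference of the nonlocal nonlinearities pointwise. Writing $g(u) = \lambda f(u)/(\int_0^T f(u)\,dx)^2$, the key algebraic step is to split
\[
g(u) - g(v) = \lambda\left\{\frac{f(u)-f(v)}{\left(\int_0^T f(u)\,dx\right)^2}
+ f(v)\left[\frac{1}{\left(\int_0^T f(u)\,dx\right)^2}
- \frac{1}{\left(\int_0^T f(v)\,dx\right)^2}\right]\right\}.
\]
The first term is handled directly by the Lipschitz bound $|f(u)-f(v)|\le L_f|u-v|$ together with the lower bound $\int_0^T f(u)\,dx \ge c_1 T$. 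For the second term I would factor the difference of reciprocal squares using the identity $a^{-2}-b^{-2} = (b-a)(b+a)/(a^2b^2)$, then bound $f(v)\le c_2$, the sum $\int_0^T(f(u)+f(v))\,dx \le 2c_2 T$, the product of denominators below by $(c_1 T)^4$, and the remaining factor $\left|\int_0^T (f(v)-f(u))\,dx\right| \le L_f \int_0^T |u-v|\,dx$. Collecting these gives a pointwise bound of the form $|g(u)-g(v)| \le \lambda L_f\left(\tfrac{1}{(c_1T)^2} + \tfrac{2c_2^2 T}{(c_1T)^4}\right)\sup_s|u(s)-v(s)|$, where I must be careful to track whether the interior differences are measured in supremum norm over $[0,T]$ or converted into the weighted norm.

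The weighted norm is the device that makes the fractional-integral factor produce exactly the stated denominator $N^{2\alpha}$. The central estimate will be
\[
e^{-Nt}\int_0^t \frac{(t-s)^{2\alpha-1}}{\Gamma(2\alpha)} e^{Ns}\,ds
\le \frac{1}{N^{2\alpha}},
\]
which follows from the substitution $r = t-s$ and the standard integral $\int_0^\infty r^{2\alpha-1}e^{-Nr}\,dr = \Gamma(2\alpha)/N^{2\alpha}$. Combining this with the pointwise bound on $|g(u)-g(v)|$ yields
\[
\|\Phi(u)-\Phi(v)\| \le \frac{\lambda L_f}{N^{2\alpha}}
\left(\frac{1}{(c_1T)^2} + \frac{2c_2^2 T}{(c_1T)^4} e^{NT}\right)\|u-v\|,
\]
so the hypothesis on $\lambda$ forces the constant below $1$ and $\Phi$ is a contraction. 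Banach's fixed-point theorem then delivers a unique $u\in X$. I expect the main obstacle to be bookkeeping in the reciprocal-square estimate, in particular correctly producing the factor $e^{NT}$ that appears in the second term of the threshold; this arises because after pulling $e^{-Nt}$ through the inner spatial integral $\int_0^T|u-v|\,dx$ one must reinsert $e^{Ns}e^{-Ns}$ and bound $e^{Ns}\le e^{NT}$, which is the asymmetry that distinguishes the two terms in the denominator of the admissible range for $\lambda$.
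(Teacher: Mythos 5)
Your proposal follows essentially the same route as the paper: the same fixed-point formulation via Lemma~\ref{m:lema}, the same splitting of $g(u)-g(v)$ into a Lipschitz term plus a difference-of-reciprocal-squares term, the same bounds $c_1T\le\int_0^T f\,dx$, $f\le c_2$, and the same weighted-norm estimate producing the $N^{-2\alpha}$ factor and the $e^{NT}$ asymmetry in the second term. The plan is correct and matches the paper's argument in all essentials.
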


\begin{proof}
Let $F : X  \to  X $ be defined by
\[
Fu= I^{2 \alpha} \left\{ \frac{\lambda f(u)}{\left(
\int_{0}^{T} f(u)\, dx\right)^{2}}+ h(t) \right\}.
\]
Then,
\begin{equation}
\begin{split}
\label{eq5}
|Fu-Fv| &= \left|I^{2 \alpha} \left\{
\frac{\lambda f(u)}{\left( \int_{0}^{T} f(u)\, dx\right)^{2}}
- \frac{\lambda f(v)}{\left( \int_{0}^{T} f(v)\, dx\right)^{2}} \right\} \right|\\
&= \left|I^{2 \alpha} \left\{
\frac{\lambda}{\left( \int_{0}^{T} f(u)\, dx\right)^{2}}(f(u)-f(v))
+\lambda f(v) \left( \frac{1}{\left( \int_{0}^{T} f(u)\, dx\right)^{2}}
- \frac{1}{\left( \int_{0}^{T} f(v)\, dx\right)^{2}} \right) \right\} \right|  \\
&\leq \left|I^{2 \alpha} \left\{
\frac{\lambda}{\left( \int_{0}^{T} f(u)\, dx\right)^{2}}(f(u)-f(v))
\right\} \right| + \left|I^{2 \alpha}
\left\{\lambda f(v) \left(  \frac{1}{\left( \int_{0}^{T} f(u)\, dx\right)^{2}}
- \frac{1}{\left( \int_{0}^{T} f(v)\, dx\right)^{2}} \right) \right\} \right|.
\end{split}
\end{equation}
We estimate each term on the right hand side of \eqref{eq5} separately.
Using then the fact that $f$ is Lipshitzian, we have
\begin{equation}
\begin{split}
\label{eq6}
\left|I^{2 \alpha} \left\{  \frac{\lambda}{
\left( \int_{0}^{T} f(u)\, dx\right)^{2}}(f(u)-f(v)) \right\} \right|
&\leq \frac{1}{(c_{1}T)^{2}} \lambda I^{2 \alpha} \{ |f(u)-f(v)| \}\\
&\leq \frac{1}{(c_{1}T)^{2}}\lambda L_{f} I^{2 \alpha} \{ |u-v| \}\\
&= \frac{1}{(c_{1}T)^{2}} \lambda L_{f} \int_0^t\frac{(t-s)^{2
\alpha-1}}{\Gamma(2 \alpha)} |u(s)-v(s)| ds.
\end{split}
\end{equation}
Since
$$
\int_0^{Nt}\frac{r^{2 \alpha-1}}{\Gamma(2 \alpha)} e^{-r}  dr
\leq \frac{1}{\Gamma(2 \alpha)} \int_0^{+\infty} r^{2 \alpha-1}e^{-r}  dr
= \frac{\Gamma(2 \alpha)}{\Gamma(2 \alpha)}=1,
$$
it follows from \eqref{eq6} that
\begin{equation*}
\begin{split}
e^{-Nt} &\left|I^{2 \alpha} \left\{
\frac{\lambda}{( \int_{0}^{T} f(u)\, dx)^{2}}(f(u)-f(v)) \right\} \right| \\
&\leq \frac{1}{(c_{1}T)^{2}} \lambda L_{f} e^{-Nt}
\int_0^t\frac{(t-s)^{2 \alpha-1}}{\Gamma(2 \alpha)} |u(s)-v(s)| ds \\
&\leq \frac{1}{(c_{1}T)^{2}} \lambda L_{f} \int_0^t\frac{(t-s)^{2
\alpha-1}}{\Gamma(2 \alpha)}
e^{-N(t-s)} e^{-Ns}|u(s)-v(s)| ds \\
&\leq\frac{1}{(c_{1}T)^{2}} \lambda L_{f}
\sup_t\{e^{-Nt}|u(t)-v(t)|\}
\int_0^t\frac{(t-s)^{2 \alpha-1}}{\Gamma(2 \alpha)} e^{-N(t-s)} ds \\
&\leq \frac{1}{(c_{1}T)^{2}} \lambda L_{f} \|u-v\|
\int_0^t\frac{(t-s)^{2 \alpha-1}}{\Gamma(2 \alpha)} e^{-N(t-s)} ds\\
&\leq \frac{1}{(c_{1}T)^{2}} \lambda L_{f} \|u-v\| \frac{1}{N^{2
\alpha}}\int_0^{Nt}
\frac{r^{2 \alpha-1}}{\Gamma(2 \alpha)} e^{-r} dr
\leq \frac{\frac{1}{(c_{1}T)^{2}} \lambda L_{f}}{N^{2 \alpha}}
\|u-v\|.
\end{split}
\end{equation*}
On the other hand, similar arguments as above yield to
\begin{equation}
\begin{split}
\label{eq8}
& \left|I^{2 \alpha} \left\{
\lambda f(v) \left( \frac{1}{\left( \int_{0}^{T} f(u)\, dx\right)^{2}}
-  \frac{1}{\left(\int_{0}^{T} f(v)\, dx\right)^{2}} \right) \right\} \right| \\
&= \left|I^{2 \alpha} \left\{  \frac{\lambda f(v)}{\left( \int_{0}^{T} f(u)\, dx\right)^{2}
\left( \int_{0}^{T} f(v)\, dx\right)^{2}} \left(  \left( \int_{0}^{T} f(u)\, dx\right)^{2}
-\left( \int_{0}^{T} f(v)\, dx\right)^{2} \right)\right\}\right|\\
&\leq  \frac{c_{2}}{(c_{1}T)^{4}} \lambda \left|I^{2 \alpha}
\left\{ \left( \int_{0}^{T} f(u)\, dx\right)^{2}
-\left( \int_{0}^{T} f(v)\, dx\right)^{2} \right\} \right|\\
&\leq  \frac{c_{2}}{(c_{1}T)^{4}}  \lambda \left|I^{2 \alpha}
\left\{  \left( \int_{0}^{T} (f(u)-f(v))\, dx\right)
\left( \int_{0}^{T} (f(u)+f(v))\, dx\right)  \right\}\right| \\
&\leq \frac{2c_{2}^{2}T}{(c_{1}T)^{4}}
\lambda I^{2 \alpha} \left\{ \int_{0}^{T} |f(u)-f(v)|\, dx \right\}\\
&\leq \frac{2c_{2}^{2}T}{(c_{1}T)^{4}}
\lambda L_{f} I^{2 \alpha}
\left\{ \int_{0}^{T} |u(x)-v(x)|\, dx \right\} \\
&\leq \frac{2c_{2}^{2}T}{(c_{1}T)^{4}}\lambda L_{f} \int_{0}^{t}
\frac{(t-s)^{2 \alpha-1}}{\Gamma(2 \alpha)}  \left( \int_{0}^{T}
|u(x)-v(x)|\, dx \right) ds.
\end{split}
\end{equation}
Then,
\begin{equation}
\label{eq9}
\begin{split}
e^{-Nt} & \left|I^{2 \alpha} \left\{
\lambda f(v) \left(  \frac{1}{( \int_{0}^{T} f(u)\, dx)^{2}}
- \frac{1}{\left( \int_{0}^{T} f(v)\, dx\right)^{2}} \right) \right\} \right|\\
&\leq  \frac{2c_{2}^{2}T}{(c_{1}T)^{4}} \lambda L_{f} \int_{0}^{t}
\frac{(t-s)^{2 \alpha-1}}{\Gamma(2 \alpha)}
\left( \int_{0}^{T} e^{-N(t-x)} e^{-Nx}|u(x)-v(x)|\, dx \right) ds\\
&\leq  \frac{2c_{2}^{2}T}{(c_{1}T)^{4}}\lambda L_{f}
\sup_t\{e^{-Nt}|u(t)-v(t)|\} \int_{0}^{t} \frac{(t-s)^{2
\alpha-1}}{\Gamma(2 \alpha)}
\left( \int_{0}^{T} e^{-N(t-x)} \, dx \right) ds \\
&\leq   \frac{2c_{2}^{2}T}{(c_{1}T)^{4}} \lambda L_{f}
\sup_t\left\{e^{-Nt}|u(t)-v(t)|\right\} \int_{0}^{t}
\frac{(t-s)^{2 \alpha-1}}{\Gamma(2 \alpha)}
e^{-Nt} \left( \frac{1}{N}(e^{NT}-1) \right) ds \\
&\leq  \frac{2c_{2}^{2}T}{(c_{1}T)^{4}} \lambda L_{f}\|u-v\|
\int_{0}^{t} \frac{(t-s)^{2 \alpha-1}}{\Gamma(2 \alpha)}e^{-Nt}
\left( \frac{1}{N}(e^{NT}-1) \right) ds \\
&\leq \frac{ \frac{2c_{2}^{2}T}{(c_{1}T)^{4}} e^{NT} \lambda
L_{f}}{N}\|u-v\|
\int_{0}^{t} \frac{(t-s)^{2 \alpha-1}}{\Gamma(2 \alpha)}e^{-Nt}  ds\\
&\leq \frac{ \frac{2c_{2}^{2}T}{(c_{1}T)^{4}}e^{NT} \lambda
L_{f}}{N}\|u-v\|
\int_{0}^{t} \frac{(t-s)^{2 \alpha-1}}{\Gamma(2 \alpha)}e^{-N(t-s)}e^{-Ns}  ds\\
&\leq \frac{ \frac{2c_{2}^{2}T}{(c_{1}T)^{4}} e^{NT} \lambda
L_{f}}{N}\|u-v\| \int_{0}^{t}
\frac{(t-s)^{2 \alpha-1}}{\Gamma(2 \alpha)}e^{-N(t-s)}  ds\\
&\leq \frac{ \frac{2c_{2}^{2}T}{(c_{1}T)^{4}} e^{NT} \lambda L_{f}}{N^{2 \alpha +1}}\|u-v\| \\
&\leq \frac{ \frac{2c_{2}^{2}T}{(c_{1}T)^{4}} e^{NT} \lambda
L_{f}}{N^{2 \alpha}}\|u-v\|.
\end{split}
\end{equation}
Gathering \eqref{eq5}--\eqref{eq9}, we get
\begin{equation*}
e^{-Nt} |Fu-Fv| \leq  \left(\frac{1}{(c_{1}T)^{2}}+
\frac{2c_{2}^{2}T}{(c_{1}T)^{4}} e^{NT}\right) \frac{\lambda
L_{f}}{N^{2 \alpha}}\|u-v\|.
\end{equation*}
Then, we have
\begin{equation*}
 \|Fu-Fv \| \leq  \left(\frac{1}{(c_{1}T)^{2}}+
\frac{2c_{2}^{2}T}{(c_{1}T)^{4}} e^{NT}\right)\frac{ \lambda
L_{f}}{N^{2 \alpha}}\|u-v\|.
\end{equation*}
Choosing $\lambda >0$ such that $\left(\frac{1}{(c_{1}T)^{2}}
+ \frac{2c_{2}^{2}T}{(c_{1}T)^{4}} e^{NT}\right)\frac{\lambda
L_{f}}{N^{2 \alpha}} < 1$, the map $ F : X \to X $ is a
contraction and it has a fixed point $u=Fu$. Hence, there exists a
unique $u \in X$ that is the solution to the integral equation
\eqref{eq4}. The result follows from Lemma~\ref{m:lema}.
\end{proof}


\subsection{Boundedness}

We now show that the condition that the electrical conductivity $f(u)$ is bounded
(hypothesis (H1)) allows to assert boundedness of $u$.

\begin{theorem}
\label{thm:bnd} Under hypotheses $(H1)$ and $(H2)$ and $\lambda
> 0$, if $u$ is the solution of \eqref{eq4}, then
\[
\|u \| \leq  \frac{\left(
\frac{\lambda}{(c_{1}T)^{2}}f(0)+h_{\infty}\right)}{N^{2 \alpha}}
e^{\frac{\lambda L_{f}}{(c_{1}TN^{\alpha})^{2}}}.
\]
\end{theorem}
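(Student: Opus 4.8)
The plan is to reduce the problem to a scalar integral inequality for the weighted quantity $\phi(t):=e^{-Nt}|u(t)|$ and then close it by a Gronwall-type argument; the constant $C:=\frac{\lambda}{(c_1T)^2}f(0)+h_\infty$ and the exponent $\frac{\lambda L_f}{(c_1TN^\alpha)^2}$ appearing in the statement already point to this structure. First I would take absolute values in \eqref{eq4}, so that for every $t\in[0,T]$ one has $|u(t)|\le I^{2\alpha}\{\frac{\lambda f(u)}{(\int_0^T f(u)\,dx)^2}+|h|\}$. Then I would insert the two hypotheses: by (H1), $\int_0^T f(u)\,dx\ge c_1T$, so the denominator is bounded below by $(c_1T)^2$, and Lipschitz continuity gives the affine majorant $f(u(s))\le f(0)+L_f|u(s)|$; by (H2), $|h|\le h_\infty$. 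Consequently the integrand is dominated by $C+\frac{\lambda L_f}{(c_1T)^2}|u(s)|$.

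Next I would pass to the weighted norm, reusing verbatim the kernel estimate already established in the proof of Theorem~\ref{thm1}, namely $\int_0^t\frac{(t-s)^{2\alpha-1}}{\Gamma(2\alpha)}e^{-N(t-s)}\,ds\le N^{-2\alpha}$. Writing $e^{-Nt}=e^{-N(t-s)}e^{-Ns}$ and using $e^{-Ns}\le1$ on the forcing term, the constant part contributes at most $C/N^{2\alpha}$, and one is left with the closed scalar inequality
\[
\phi(t)\le \frac{C}{N^{2\alpha}}+\frac{\lambda L_f}{(c_1T)^2}\int_0^t\frac{(t-s)^{2\alpha-1}}{\Gamma(2\alpha)}e^{-N(t-s)}\phi(s)\,ds .
\]
Finally I would invoke a Gronwall-type inequality on this relation. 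The convolution kernel $k(\tau)=\frac{\lambda L_f}{(c_1T)^2}\frac{\tau^{2\alpha-1}}{\Gamma(2\alpha)}e^{-N\tau}$ has total mass $\int_0^\infty k(\tau)\,d\tau=\frac{\lambda L_f}{(c_1T)^2N^{2\alpha}}=\frac{\lambda L_f}{(c_1TN^\alpha)^2}$, which is exactly the exponent in the statement, so the lemma yields $\phi(t)\le \frac{C}{N^{2\alpha}}\exp(\frac{\lambda L_f}{(c_1TN^\alpha)^2})$; taking the supremum over $t\in[0,T]$ bounds $\|u\|$ by the right-hand side of the theorem.

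I expect this last step to be the only delicate point. The kernel is weakly singular, since $2\alpha-1<0$, and of convolution type, so the elementary Gronwall lemma---phrased for a factor $k(s)\phi(s)$ rather than a convolution $k(t-s)\phi(s)$---does not apply literally. An honest Neumann/resolvent iteration controls the iterates $k^{*n}\!*1$ by their total masses $(\int k)^n$ and hence yields only the geometric bound $\frac{C/N^{2\alpha}}{1-\int k}$, which is valid for $\int k<1$ and is in fact weaker than the stated exponential, since $\frac{1}{1-x}\ge e^x$. Obtaining the sharper exponential form therefore requires a generalized (singular-kernel) Gronwall inequality controlling the iterates by $(\int k)^n/n!$ rather than by $(\int k)^n$; identifying and justifying the precise version of the inequality that produces the factor $\exp(\frac{\lambda L_f}{(c_1TN^\alpha)^2})$ is the real content of the argument, everything preceding it being the routine majorization of the nonlocal nonlinearity via (H1)--(H2).
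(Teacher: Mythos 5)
Your route is exactly the paper's: take absolute values in \eqref{eq4}, split $f(u(s))$ as $f(0)+(f(u(s))-f(0))$ and use (H1)--(H2) to majorize the integrand by $C+\frac{\lambda L_f}{(c_1T)^2}|u(s)|$ with $C=\frac{\lambda}{(c_1T)^2}f(0)+h_\infty$, multiply by $e^{-Nt}=e^{-N(t-s)}e^{-Ns}$, reuse the kernel estimate $\int_0^t\frac{(t-s)^{2\alpha-1}}{\Gamma(2\alpha)}e^{-N(t-s)}\,ds\le N^{-2\alpha}$ from Theorem~\ref{thm1}, and arrive at precisely the scalar inequality you display for $\phi(t)=e^{-Nt}|u(t)|$. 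Up to that point your argument and the paper's coincide line for line.

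The ``delicate point'' you isolate is real, and the paper does not resolve it: it simply cites ``Gronwall's lemma'' and writes $\phi(t)\le \frac{C}{N^{2\alpha}}\exp\bigl(\int_0^t k(t-s)\,ds\bigr)$, i.e.\ it applies the classical lemma (stated for $\phi(t)\le a+\int_0^t b(s)\phi(s)\,ds$) with the $t$-dependent, weakly singular convolution kernel $k(t-s)$ in the role of $b(s)$. As you observe, the honest iteration controls the $n$-th iterate by $\kappa^n$ with $\kappa=\frac{\lambda L_f}{(c_1TN^{\alpha})^{2}}$ (the $e^{-N\tau}$ factor makes the iterated masses exactly multiplicative), giving $\frac{C/N^{2\alpha}}{1-\kappa}$ for $\kappa<1$, or a Mittag--Leffler bound via the Ye--Gao--Ding generalized Gronwall inequality; neither is the stated $e^{\kappa}$, and the factorial decay $\kappa^n/n!$ needed for the exponential is not available for convolution kernels. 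So your proposal is a faithful reconstruction of the paper's proof together with a correct diagnosis of its weakest step. It is worth noting that boundedness of $\|u\|$ itself follows much more cheaply from the upper bound $f\le c_2$ in (H1): $|u(t)|\le I^{2\alpha}\{\frac{\lambda c_2}{(c_1T)^2}+h_\infty\}$ gives $\|u\|\le\frac{1}{N^{2\alpha}}\left(\frac{\lambda c_2}{(c_1T)^2}+h_\infty\right)$ with no Gronwall argument at all, though with a different constant from the one in the statement.
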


\begin{proof}
One has
\begin{equation*}
\begin{split}
|u(t)| &\leq  I^{2 \alpha} \left\{ \frac{\lambda |f(u)|}{(
\int_{0}^{T} f(u)\, dx)^{2}}+ |h(t)| \right\}\\
&\leq \frac{\lambda}{(c_{1}T)^{2}}  \int_0^t\frac{(t-s)^{2
\alpha-1}}{\Gamma(2 \alpha)} |f(u(s))-f(0)|ds
+ \int_0^t\frac{(t-s)^{2 \alpha-1}}{\Gamma(2 \alpha)}
\left(|h(s)|+\frac{\lambda}{(c_{1}T)^{2}}f(0)\right) ds \\
&\leq  \frac{\lambda L_{f}}{(c_{1}T)^{2}} \int_0^t\frac{(t-s)^{2
\alpha-1}}{\Gamma(2 \alpha)}  |u(s)| ds
+ \left(\frac{\lambda}{(c_{1}T)^{2}}f(0)+h_{\infty}\right)
\int_0^t\frac{(t-s)^{2 \alpha-1}}{\Gamma(2 \alpha)} ds.
\end{split}
\end{equation*}
Then,
\begin{equation*}
\begin{split}
e^{-Nt}|u(t)| &\leq  \frac{\lambda L_{f}}{(c_{1}T)^{2}}
\int_0^t\frac{(t-s)^{2 \alpha-1}}{\Gamma(2 \alpha)} e^{-Nt} |u(s)|
ds + ( \frac{\lambda}{(c_{1}T)^{2}}f(0)+h_{\infty})
\int_0^t\frac{(t-s)^{2 \alpha-1}}{\Gamma(2 \alpha)} e^{-Nt} ds \\
&\leq  \frac{\lambda L_{f}}{(c_{1}T)^{2}} \int_0^t\frac{(t-s)^{2
\alpha-1}}{\Gamma(2 \alpha)} e^{-N(t-s)}e^{-Ns} |u(s)| ds + (
\frac{\lambda}{(c_{1}T)^{2}}f(0)+h_{\infty})
\int_0^t\frac{(t-s)^{2 \alpha-1}}{\Gamma(2 \alpha)} e^{-N(t-s)}e^{-Ns}  ds \\
&\leq  \frac{\lambda L_{f}}{(c_{1}T)^{2}} \int_0^t\frac{(t-s)^{2
\alpha-1}}{\Gamma(2 \alpha)} e^{-N(t-s)}e^{-Ns} |u(s)| ds + (
\frac{\lambda}{(c_{1}T)^{2}}f(0)+h_{\infty})
\int_0^t\frac{(t-s)^{2 \alpha-1}}{\Gamma(2 \alpha)} e^{-N(t-s)}  ds \\
&\leq  \frac{\lambda L_{f}}{(c_{1}T)^{2}} \int_0^t\frac{(t-s)^{2
\alpha-1}}{\Gamma(2 \alpha)} e^{-N(t-s)}e^{-Ns} |u(s)| ds +
\frac{( \frac{\lambda}{(c_{1}T)^{2}}f(0)+h_{\infty})}{N^{2
\alpha}}
\int_0^{Nt}\frac{r^{2 \alpha-1}}{\Gamma(2 \alpha)} e^{-r}  dr \\
&\leq \frac{( \frac{\lambda}{(c_{1}T)^{2}}f(0)+h_{\infty})}{N^{2
\alpha}} + \frac{\lambda L_{f}}{(c_{1}T)^{2}}
\int_0^t\frac{(t-s)^{2 \alpha-1}}{\Gamma(2 \alpha)}
e^{-N(t-s)}\left(e^{-Ns} |u(s)|\right) ds.
\end{split}
\end{equation*}
Using Gronwall's lemma, we have
\begin{equation*}
\begin{split}
e^{-Nt}|u(t)| &\leq \frac{(
\frac{\lambda}{(c_{1}T)^{2}}f(0)+h_{\infty})}{N^{2 \alpha}}
e^{\frac{\lambda L_{f}}{(c_{1}T)^{2}}
\int_0^t\frac{(t-s)^{2 \alpha-1}}{\Gamma(2 \alpha)} e^{-N(t-s)} ds}\\
&\leq \frac{( \frac{\lambda}{(c_{1}T)^{2}}f(0)+h_{\infty})}{N^{2 \alpha}}
e^{\frac{\frac{\lambda L_{f}}{(c_{1}T)^{2}}}{N^{2 \alpha}}
\int_0^{Nt}\frac{r^{2 \alpha-1}}{\Gamma(2 \alpha)} e^{-r} dr }\\
&\leq\frac{( \frac{\lambda}{(c_{1}T)^{2}}f(0)+h_{\infty})}{N^{2
\alpha}} e^{\frac{\frac{\lambda L_{f}}{(c_{1}T)^{2}}}{N^{2 \alpha}}}.
\end{split}
\end{equation*}
Then,
\[
\|u \| \leq  \frac{(
\frac{\lambda}{(c_{1}T)^{2}}f(0)+h_{\infty})}{N^{2 \alpha}}
e^{\frac{\frac{\lambda L_{f}}{(c_{1}T)^{2}}}{N^{2 \alpha}}}=
\frac{( \frac{\lambda}{(c_{1}T)^{2}}f(0)+h_{\infty})}{N^{2
\alpha}} e^{\frac{\lambda L_{f}}{(c_{1}TN^{\alpha})^{2}}}
\]
and we conclude that $u$ is bounded.
\end{proof}


\section*{Acknowledgements}

Work supported by {\it FEDER} funds through
{\it COMPETE} --- Operational Programme Factors of Competitiveness
(``Programa Operacional Factores de Competitividade'')
and by Portuguese funds through the
{\it Center for Research and Development
in Mathematics and Applications} (University of Aveiro)
and the Portuguese Foundation for Science and Technology
(``FCT --- Funda\c{c}\~{a}o para a Ci\^{e}ncia e a Tecnologia''),
within project PEst-C/MAT/UI4106/2011
with COMPETE number FCOMP-01-0124-FEDER-022690.
The authors were also supported by the project \emph{New Explorations in
Control Theory Through Advanced Research} (NECTAR) cofinanced by
FCT, Portugal, and the \emph{Centre National de la Recherche
Scientifique et Technique} (CNRST), Morocco.

The authors are grateful to an anonymous referee
for several corrections and useful comments.


{\small

}



\begin{thebibliography}{00}

\bibitem{sayed}
S. Abbas, Existence of solutions to fractional order ordinary and
delay differential equations and applications, 
Electron. J. Differential Equations {\bf 2011}, No. 9, 11 pp.

\bibitem{agarwal1}
R. P. Agarwal, M. Benchohra\ and\ S. Hamani,
Boundary value problems for fractional differential equations,
Georgian Math. J. {\bf 16} (2009), no.~3, 401--411.

\bibitem{agarwal}
R. P. Agarwal, Y. Zhou\ and\ Y. He,
Existence of fractional neutral functional differential equations,
Comput. Math. Appl. {\bf 59} (2010), no.~3, 1095--1100.

\bibitem{ac}
S. N. Antontsev\ and\ M. Chipot,
The thermistor problem: existence, smoothness uniqueness, blowup,
SIAM J. Math. Anal. {\bf 25} (1994), no.~4, 1128--1156.

\bibitem{cim}
G. Cimatti, Existence of weak solutions for the
nonstationary problem of the joule heating of a conductor,
Ann. Mat. Pura Appl. (4) {\bf 162} (1992), 33--42.

\bibitem{72}
L. Gaul, P. Klein\ and\ S. Kempfle,
Damping description involving fractional operators,
Mech. Syst. Signal Process. {\bf 5} (1991), 81--88.

\bibitem{gian}
C. Giannantoni,
The problem of the initial conditions and their physical meaning
in linear differential equations of fractional order,
Appl. Math. Comput. {\bf 141} (2003), no.~1, 87--102.

\bibitem{73}
R. Hilfer,
{\it Applications of fractional calculus in physics},
World Sci. Publishing, River Edge, NJ, 2000.

\bibitem{MR1215637}
S. D. Howison, J. F. Rodrigues\ and\ M. Shillor,
Stationary solutions to the thermistor problem,
J. Math. Anal. Appl. {\bf 174} (1993), no.~2, 573--588.

\bibitem{7}
A. A. Kilbas, H. M. Srivastava\ and\ J. J. Trujillo,
{\it Theory and applications of fractional differential equations},
North-Holland Mathematics Studies, 204, Elsevier, Amsterdam, 2006.

\bibitem{kw}
K. Kwok,
{\it Complete guide to semiconductor devices},
McGraw-Hill, New york, 1995.

\bibitem{lac2}
A. A. Lacey,
Thermal runaway in a non-local problem modelling Ohmic heating. II.
General proof of blow-up and asymptotics of runaway,
European J. Appl. Math. {\bf 6} (1995), no.~3, 201--224.

\bibitem{mac}
E. D. Maclen,
{\it Thermistors},
Electrochemical publication, Glasgow, 1979.

\bibitem{4}
K. S. Miller\ and\ B. Ross,
{\it An introduction to the fractional calculus
and fractional differential equations},
A Wiley-Interscience Publication, Wiley, New York, 1993.

\bibitem{MyID:181}
D. Mozyrska\ and\ D. F. M. Torres,
Modified optimal energy and initial memory
of fractional continuous-time linear systems,
Signal Process. {\bf 91} (2011), no.~3, 379--385.
{\tt arXiv:1007.3946}

\bibitem{pod1}
I. Podlubny,
{\it Fractional differential equations},
Mathematics in Science and Engineering,
198, Academic Press, San Diego, CA, 1999.

\bibitem{5}
I. Podlubny\ and\ A. M. A. El-Sayed,
On two definitions of fractional calculus,
preprint UEF (ISBN 80-7099-252-2), Solvak Academy of
Science-Institute of Experimental Phys (1996), 03-06.

\bibitem{saba}
J. Sabatier, O. P. Agrawal, J. A. Tenreiro Machado,
{\it Advances in fractional calculus}, Springer, Dordrecht, 2007.

\bibitem{samko}
S. G. Samko, A. A. Kilbas\ and\ O. I. Marichev,
{\it Fractional integrals and derivatives},
translated from the 1987 Russian original,
Gordon and Breach, Yverdon, 1993.

\bibitem{75}
H. M. Srivastava\ and\ R. K. Saxena,
Operators of fractional integration and their applications,
Appl. Math. Comput. {\bf 118} (2001), no.~1, 1--52.

\bibitem{tza}
D. E. Tzanetis,
Blow-up of radially symmetric solutions
of a non-local problem modelling Ohmic heating,
Electron. J. Differential Equations {\bf 2002} (2002), no.~11, 26~pp.

\end{thebibliography}
\end{document}